\documentclass[11pt,a4paper,reqno]{amsart}

\usepackage[T1]{fontenc}
\usepackage[utf8]{inputenc}
\usepackage[left=2.5cm,right=2.5cm,top=2.7cm,bottom=3cm]{geometry}
\usepackage{amsmath,amssymb,amsthm}
\usepackage{bbm}
\usepackage{microtype}
\usepackage[hidelinks]{hyperref}

\newtheorem{theorem}{Theorem}
\newtheorem{lemma}{Lemma}
\newtheorem{prop}{Proposition}

\DeclareMathOperator{\supp}{supp}
\DeclareMathOperator{\dist}{dist}

\title{Stability of concentration in the Paley--Wiener space}
\author{Denis Zelent}
\address{Department of Mathematical Sciences,
Norwegian University of Science and Technology (NTNU),
7491 Trondheim, Norway}
\email{denis.zelent@ntnu.no}
\thanks{The author was supported by Grant 334466 of the
Research Council of Norway.}
\date{}

\begin{document}

\begin{abstract}
We prove stability of concentration in the Paley--Wiener
space for each fixed time--bandwidth product. The concentration
deficit controls the squared $L^2(\mathbb R)$-distance from
a translated first prolate spheroidal wave function and
the squared normalized symmetric difference from an interval
at the same center. 
The obtained exponent for the function distance is optimal.
The main idea is the introduction of the set discrepancy weighted by the best optimizers in place of the symmetric difference.
The proof then combines elementary Hilbert space arguments with classical properties of the prolate spheroidal wave functions.
\end{abstract}

\maketitle

\section{Introduction}

\subsection{The concentration problem}
\label{sec:main-results}
Let
\[
PW=\left\{f\in L^2(\mathbb R):
\supp\widehat f\subset[-1/2,1/2]\right\},
\qquad
\widehat f(\xi)=\int_{\mathbb R}f(x)e^{-2\pi i x\xi}\,dx,
\]
denote the Paley--Wiener space and the Fourier transform accordingly. Given a measurable set $E\subset\mathbb R$ with
$0<|E|<\infty$, we consider the fraction
\[
\frac{\displaystyle\int_E|f(x)|^2\,dx}{\|f\|_2^2},
\qquad 0\ne f\in PW.
\]
This measures how much of a band-limited function is concentrated
on $E$. Finding an upper bound in terms of $|E|$, and identifying
the functions and sets that attain it, is a classical problem in
Fourier analysis. It is closely connected with uncertainty
principles and the recovery of signals from incomplete data;
see \cite{DS89,Slepian}.

When $E$ is an interval, the concentration problem is solved
by the classical theory of prolate spheroidal wave functions,
developed in the work of Slepian, Landau and Pollak
\cite{LP,SP}. For $s>0$, let
\[
I_s=(-s/2,s/2),
\qquad
\lambda_0(s)=
\max_{0\ne f\in PW}
\frac{\displaystyle\int_{I_s}|f(x)|^2\,dx}{\|f\|_2^2}.
\]
The maximum is attained precisely by the nonzero scalar multiples
of the first prolate spheroidal wave function, scaled to the
interval $I_s$ and the frequency band $[-1/2,1/2]$;
see \cite[Sections III, IV.3 and V]{SP}.
We denote it by $\psi_s$ in the case when it is real, even, and normalized by $\|\psi_s\|_{L^2(\mathbb R)}=1$ and $\psi_s(0)>0$.
It satisfies, for any $x\in\mathbb R$,
\begin{equation}
\int_{-s/2}^{s/2}
\frac{\sin\pi(x-y)}{\pi(x-y)}\psi_s(y)\,dy
=
\lambda_0(s)\psi_s(x).
\label{eq:prolate-integral}
\end{equation}
After the change of variables $x=st/2$, the function
$t\mapsto\psi_s(st/2)$ is proportional to the angular prolate
spheroidal wave function of order zero and degree zero, with
parameter $c=\pi s/2$; see \cite[Section V]{SP}.
The largest concentration eigenvalue is simple, and
\[
\int_{I_s}|\psi_s(x)|^2\,dx=\lambda_0(s),
\qquad 0<\lambda_0(s)<1.
\]

The corresponding problem for a general set $E$ was considered
by Donoho and Stark \cite{DS89,DS93}. Abreu and Speckbacher
\cite[Theorem 1.1]{AS} proved that intervals are optimal among
all measurable sets of the same measure. In our normalization,
\begin{equation}
\int_E|f(x)|^2\,dx
\le\lambda_0(s)\|f\|_2^2,
\qquad f\in PW,\quad |E|=s.
\label{eq:interval}
\end{equation}
The pair $(\psi_s(\cdot-a),I_s+a)$ attains equality for every
$a\in\mathbb R$.

We also use the equality characterization
\begin{equation}
\int_E|f(x)|^2\,dx=\lambda_0(s)\|f\|_2^2
\quad\Longleftrightarrow\quad
\begin{cases}
E=I_s+a \quad\text{up to a null set},\\
f=c\psi_s(\cdot-a),
\end{cases}
\label{eq:equality}
\end{equation}
for $0\ne f\in PW$, $|E|=s$, and some $a\in\mathbb R$ and
$c\in\mathbb C\setminus\{0\}$.
The set assertion can be obtained by a real-line adaptation of the
zero-block variation used for the circle interval theorem in
\cite[Sections 4--8]{AS}; that adaptation is not reproduced here.
Once the set is an interval, the function assertion follows from
simplicity of the first prolate eigenvalue \cite[Section III]{SP}.
%We are aware that the authors of \cite{AS} work on a changed version of their manuscrpit, hence we don't provide the full proof in case they will do so. If that won't happen, we will include the proof here.

We study the stability of \eqref{eq:interval}: when the
concentration is close to $\lambda_0(s)$, must the function be
close to a translated prolate spheroidal wave function and the set close to the interval at the same center?

Fix $s>0$ for the remainder of the paper. For $\|f\|_2=1$ and
$|E|=s$, define the concentration deficit by
\begin{equation}
\Delta(f,E)=\lambda_0(s)-\int_E|f(x)|^2\,dx.
\label{eq:deficit}
\end{equation}
For a given $a\in\mathbb{R}$, write
\[
d_a(f)=\inf_{\theta\in\mathbb R}
\|f-e^{i\theta}\psi_s(\cdot-a)\|_2
\]
to denote the distance from $f$ to the closest prolate spheroidal wave function.
For the set, the usual normalized distance from an interval is
\[
\mathcal A(E)
=
\inf_{a\in\mathbb R}
\frac{|E\triangle(I_s+a)|}{s}.
\]
This is the one-dimensional Fraenkel asymmetry. It was introduced
for the quantitative isoperimetric inequality \cite{FMP,Hall} and is equally natural for the Faber--Krahn inequality \cite{BDV}:
perimeter, volume and the first Dirichlet eigenvalue do not depend
on where a detached piece of the set is placed, and the asymmetry
stops depending on it as soon as the piece is far enough from the rest of the set. Concentration, by contrast, does depend on where the piece goes, and this is what the asymmetry misses: it records how much of the set is misplaced, not how far.

To illustrate this distinction, let $r=s/2$ and, for
$0<\varepsilon<r$ and $b\ge\varepsilon$, put
\[
E_{\varepsilon,b}
=
(-r,r-\varepsilon)
\cup
(r-\varepsilon+b,r+b).
\]
This is the interval $I_s$ with a piece of length $\varepsilon$
removed from its right end and shifted to the right by $b$.
For every $b\ge\varepsilon$,
\[
|E_{\varepsilon,b}|=s,
\qquad
\mathcal A(E_{\varepsilon,b})=\frac{2\varepsilon}{s}.
\]

The loss of optimal concentration, however, distinguishes
these sets. Indeed, write
\[
\eta(E)=\lambda_0(s)-\max_{\substack{g\in PW\\\|g\|_2=1}}\int_E|g|^2.
\]
Thus $\eta(E)$ is the smallest concentration deficit on $E$. One can then compute that, as $\varepsilon\downarrow0$,
\[
\eta(E_{\varepsilon,\varepsilon})\asymp_s\varepsilon^2,
\qquad
\eta(E_{\varepsilon,b_0})\asymp_{s,b_0}\varepsilon
\]
for every fixed $b_0>0$.
The two families therefore have the same Fraenkel asymmetry
but different orders of concentration deficit. In particular,
no fixed power of $\mathcal A(E)$ is uniformly comparable
to $\eta(E)$ over sets of measure $s$. 
This motivates the introduction of set discrepancy weighted by the best optimizers,
\begin{equation}
D_a(E)=\lambda_0(s)-\int_E|\psi_s(x-a)|^2\,dx,
\qquad
\mathcal D(E)=\inf_{a\in\mathbb R}D_a(E).
\label{eq:set-discrepancy}
\end{equation}
We call any minimizing point $a_E$ an optimal center of $E$.
% Such a point exists; this is checked below.
% The advantage of this discrepancy for our argument is that, as we will see, it is itself a concentration deficit on $E$.
% $D_a(E)=\Delta(\psi_s(\cdot-a),E)$
% It also has a weighted
% symmetric-difference representation, given in
% \eqref{eq:weighted}, which allows us to cancel the boundary
% values of a density perturbation. The usual symmetric-difference
% estimate will follow from this weighted estimate.

\subsection{Motivation behind the new notion of set discrepancy - concentration in the Fock space}\label{STFT} The idea and usefulness of using such a measure of discrepancy came to the author first in the setting of the Fock space, while we realized later that analogous ideas work well in the Paley--Wiener case as well. 
Let $\varphi(t)=2^{1/4}e^{-\pi t^2}$ and let
\[
V_\varphi f(x,\xi)=\int_{\mathbb R}f(t)\,\overline{\varphi(t-x)}\,
e^{-2\pi i\xi t}\,dt,
\qquad z=(x,\xi)\in\mathbb R^2,
\]
be the short-time Fourier transform with Gaussian window; see
\cite{Gro} for background. For $z_0=(x_0,\xi_0)$ write
$\pi(z_0)f(t)=e^{2\pi i\xi_0 t}f(t-x_0)$, so that
$|V_\varphi(\pi(z_0)\varphi)(z)|^2=e^{-\pi|z-z_0|^2}$.
For a measurable $U\subset\mathbb C$ with $|U|=s$, the localization
operator $\mathcal L_U$ of Daubechies \cite{Dau} is defined by
$\langle\mathcal L_Uf,g\rangle=\int_U V_\varphi f\,\overline{V_\varphi g}\,dA$,
where $dA$ is planar Lebesgue measure; it is positive and compact, and
$\langle\mathcal L_Uf,f\rangle$ is the part of
$\|f\|_2^2=\|V_\varphi f\|_2^2$ concentrated on $U$. Nicola and Tilli
\cite{NT} proved that $\langle\mathcal L_Uf,f\rangle\le(1-e^{-s})\|f\|_2^2$,
with equality only for disks and shifted Gaussians, and G\'omez,
Guerra, Ramos and Tilli \cite{GGRT} proved a sharp quantitative
version of this inequality.
% Let $\varphi(t)=2^{1/4}e^{-\pi t^2}$ be the normalized Gaussian,
% so that
% \[
% |V_\varphi(\pi(z_0)\varphi)(z)|^2
% =
% e^{-\pi|z-z_0|^2},
% \]
% where $\pi(z_0)$ denotes a time--frequency shift.

To define discrepancy weighted by the best optimizers in this setting, for a measurable set $U\subset\mathbb C$ with $|U|=s>0$ we put
\[
\mathcal D(U)
=
\frac1s\left(
1-e^{-s}
-
\sup_{z_0\in\mathbb C}
\int_U e^{-\pi|z-z_0|^2}\,dA(z)
\right),
\]
and
\[
\mathcal A(U)
=
\inf_{z_0\in\mathbb C}
\frac{|U\triangle D(z_0,\sqrt{s/\pi})|}{s}.
\]
Here $D(z_0,r)$ is the disk of radius $r$ centered at $z_0$.

Fix $z_0\in\mathbb C$, let $B=D(z_0,\sqrt{s/\pi})$, and write
\[
m=|B\setminus U|=|U\setminus B|
=\frac12|U\triangle B|.
\]
For a given $m$, the loss of Gaussian mass is smallest when
the missing part lies just inside the disk boundary and the
added part just outside it. Comparing with these adjacent
annuli and using the radial variable $t=\pi|z-z_0|^2$, we get
\begin{align*}
\int_{B\setminus U}e^{-\pi|z-z_0|^2}\,dA(z)
&\ge
\int_{s-m}^{s}e^{-t}\,dt
=e^{-s}(e^m-1),\\
\int_{U\setminus B}e^{-\pi|z-z_0|^2}\,dA(z)
&\le
\int_s^{s+m}e^{-t}\,dt
=e^{-s}(1-e^{-m}).
\end{align*}
Subtracting and using $\cosh m-1\ge m^2/2$, we obtain
\[
e^{-s}m^2
\le
2e^{-s}(\cosh m-1)
\le
(1-e^{-s})-\int_Ue^{-\pi|z-z_0|^2}\,dA(z)
\le m.
\]
The upper bound follows by discarding the added mass and
bounding the Gaussian by one on the missing part.
Dividing by $s$, substituting $m=|U\triangle B|/2$, and
taking the infimum over $z_0$ gives
\begin{equation}
\frac{se^{-s}}4\,\mathcal A(U)^2
\le
\mathcal D(U)
\le
\frac12\,\mathcal A(U).
\label{eq:Gaussian-geometric-comparison}
\end{equation}
% Since the Gaussian weight is radially decreasing, the two
% halves of the symmetric difference can be rearranged into
% annuli adjacent to the disk boundary. Using
% \[
% \int_0^r 2\pi t\,e^{-\pi t^2}\,dt=1-e^{-\pi r^2},
% \]
% we obtain
% \begin{equation}
% \frac{se^{-s}}4\,\mathcal A(U)^2
% \le
% \mathcal D(U)
% \le
% \frac12\,\mathcal A(U).
% \label{eq:Gaussian-geometric-comparison}
% \end{equation}
% Both powers occur: moving a thin annulus across the boundary
% gives $\mathcal D\asymp_s\mathcal A^2$, whereas removing a
% small disk from the center and placing it far away gives
% $\mathcal D\asymp_s\mathcal A$.

Now let $\lambda_0(U)$ be the largest eigenvalue of the
Gaussian localization operator $\mathcal L_U$. By \cite{NT},
$\lambda_0(U)\le1-e^{-s}$. Define the relative spectral deficit
by
\[
\varepsilon
=
1-\frac{\lambda_0(U)}{1-e^{-s}}.
\]
The functional stability estimate
\cite[Theorem 1.1, inequality (1.7)]{GGRT}, applied to a
normalized top eigenfunction $f_U$, gives a shifted Gaussian
$g=e^{i\theta}\pi(z_0)\varphi$ such that
\[
\|f_U-g\|_2^2\le Ce^s\varepsilon.
\]
Write $g=af_U+h$, with $h\perp f_U$. Then
\[
1-|a|^2=\|h\|_2^2\le\|g-f_U\|_2^2,
\]
and positivity together with the eigenvalue equation gives
\[
\langle\mathcal L_Ug,g\rangle
=
|a|^2\lambda_0(U)+\langle\mathcal L_Uh,h\rangle
\ge
|a|^2\lambda_0(U).
\]
Consequently,
\begin{align*}
s\mathcal D(U)
&\le
(1-e^{-s})-\langle\mathcal L_Ug,g\rangle\\
&\le
(1-e^{-s})-\lambda_0(U)
+\lambda_0(U)\|g-f_U\|_2^2\\
&\le
(1-e^{-s})\bigl(\varepsilon+\|g-f_U\|_2^2\bigr)\\
&\le
(1-e^{-s})(1+Ce^s)\varepsilon.
\end{align*}

Conversely, every shifted Gaussian is an admissible normalized
function in the concentration problem, so
\[
\sup_{z_0\in\mathbb C}
\int_U e^{-\pi|z-z_0|^2}\,dA(z)
\le\lambda_0(U).
\]
Hence
\[
s\mathcal D(U)
\ge
(1-e^{-s})-\lambda_0(U)
=
(1-e^{-s})\varepsilon.
\]
Combining the two estimates, we obtain
\begin{equation}
\frac{1-e^{-s}}s\,\varepsilon
\le
\mathcal D(U)
\le
\frac{1-e^{-s}}s(1+Ce^s)\,\varepsilon.
\label{eq:Gaussian-spectral-comparison}
\end{equation}
Thus, for fixed $s$, the weighted discrepancy is comparable
to the relative spectral deficit. From
\eqref{eq:Gaussian-geometric-comparison} it follows that
\[
\mathcal A(U)\le \frac{2}{s}\sqrt{(e^s-1)(1+Ce^s)}\sqrt{\varepsilon},
\]
which recovers, for the spectral deficit, the set estimate of \cite{GGRT}.
The improved power concerns the weighted discrepancy:
the square-root exponent is optimal both for Fraenkel
asymmetry and for the distance of normalized first
eigenfunctions from the Gaussian family; see
\cite[Theorem 4]{Ramos}.

We now return to the Paley--Wiener space and the discrepancy
$\mathcal D(E)$ defined in \eqref{eq:set-discrepancy}.
The theorem below shows that this discrepancy and the
squared function distance are controlled by the concentration
deficit at the same center.

\subsection{The main result}
\begin{theorem}[Global stability]
\label{thm:main}
For every $s>0$ there exists a constant $C_s>0$ such that
every measurable $E\subset\mathbb R$ with $|E|=s$, every
$f\in PW$ with $\|f\|_2=1$, and every optimal center $a_E$
satisfy
\begin{equation}
\mathcal D(E)+d_{a_E}(f)^2\le C_s\Delta(f,E).
\label{eq:main}
\end{equation}
Moreover, at the same center,
\begin{equation}
\left(\frac{|E\triangle(I_s+a_E)|}{s}\right)^2 + d_{a_E}(f)^2
\le C_s\Delta(f,E),
\label{eq:main-measure}
\end{equation}
after increasing $C_s$ if necessary.
\end{theorem}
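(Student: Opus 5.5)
The plan is to work at a fixed optimal center $a=a_E$ and split $f$ along $\psi_a:=\psi_s(\cdot-a)$. Set $J=I_s+a$ and $m=|E\setminus J|=|J\setminus E|$. Write $f=\alpha\psi_a+h$ with $h\perp\psi_a$ and $|\alpha|^2+\|h\|_2^2=1$. Since $d_a(f)^2\le 2\|h\|_2^2$, it suffices to bound both $D_a(E)$ and $\|h\|_2^2$ by $C_s\Delta(f,E)$.

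\textbf{Step 1: expand the deficit.} Expanding $\int_E|f|^2$ gives
\[
\Delta(f,E)=|\alpha|^2D_a(E)+\Bigl(\lambda_0(s)\|h\|_2^2-\int_E|h|^2\Bigr)-2\,\mathrm{Re}\Bigl(\bar\alpha\int_E h\,\overline{\psi_a}\Bigr).
\]
\textbf{Step 2: rewrite the cross term.} By \eqref{eq:prolate-integral}, the projection onto $PW$ of $\chi_J\psi_a$ equals $\lambda_0(s)\psi_a$. Hence $\int_J h\overline{\psi_a}=\lambda_0(s)\langle h,\psi_a\rangle=0$, and the cross term equals $\int(\chi_E-\chi_J)h\overline{\psi_a}$, which lives on $E\triangle J$.

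\textbf{Step 3: a weighted formula for $D_a$.} Let $\kappa=\psi_s(s/2)^2$. I will use the classical fact that $\psi_s^2>\kappa$ on $I_s$ and $\psi_s^2<\kappa$ off $\overline{I_s}$, with nonvanishing derivative at $\pm s/2$. Together with $|E\setminus J|=|J\setminus E|$, this gives
\[
D_a(E)=\int_{E\triangle J}\bigl|\psi_a^2-\kappa\bigr|.
\]
The weight $w=|\psi_a^2-\kappa|$ vanishes only linearly at $\partial J$ and is bounded below away from it. A bathtub rearrangement then gives $D_a(E)\ge c_s\min(m,1)^2$, mirroring the Fock-space computation in \S\ref{STFT}. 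This already yields \eqref{eq:main-measure} from \eqref{eq:main}, so the whole task reduces to \eqref{eq:main}.

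\textbf{Step 4: the quadratic term in $h$ when $m$ is small.} I split $\int_E|h|^2\le\int_J|h|^2+\int_{E\setminus J}|h|^2$. On the interval, the spectral gap gives $\int_J|h|^2\le\lambda_1(s)\|h\|_2^2$ for $h\perp\psi_a$, since the top prolate eigenvalue is simple. Off the interval, $\|h\|_\infty\le\|h\|_2$ on $PW$, so $\int_{E\setminus J}|h|^2\le m\|h\|_2^2$. For $m\le m_0(s)$ small this gives
\[
\lambda_0\|h\|_2^2-\int_E|h|^2\ge\tfrac12(\lambda_0-\lambda_1)\|h\|_2^2 .
\]

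\textbf{Step 5: the cross term when $m$ is small (main obstacle).} A plain Cauchy--Schwarz bound $\sqrt\kappa\int_{E\triangle J}|h|$ is not controlled by $D_a$, because $w$ degenerates at $\partial J$. Here I would use optimality of $a$. The condition $\frac{d}{da}D_a(E)=0$ says $\int_E(\psi_a^2)'=0$, which forces the boundary masses of $E\setminus J$ and $J\setminus E$ to balance at each end, weighted by $\psi'$.

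Write $\psi_a=\sqrt\kappa+(\psi_a-\sqrt\kappa)$ near each endpoint. The part with $\psi_a-\sqrt\kappa$ is bounded by $\bigl(\int_{E\triangle J}w\bigr)^{1/2}\|h\|_\infty m^{1/2}$, roughly, which Young's inequality absorbs into $\epsilon\|h\|^2+C D_a$. For the constant part $\sqrt\kappa\int(\chi_E-\chi_J)h$, I would replace $h$ by its value at the nearby endpoint. Because the masses on the two sides balance, this replacement cancels to leading order. The remainder is bounded by $\|h'\|_\infty\int_{E\triangle J}\dist(x,\partial J)\lesssim\|h\|_2D_a^{1/2}$, using Bernstein's inequality and $w\gtrsim\dist(\cdot,\partial J)$ near $\partial J$. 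Far pieces of $E\triangle J$ are harmless since $w\gtrsim1$ there. I would first try to make this cancellation work with the balance condition exactly as stated; if it fails, I would look for a variant of the optimality condition that suffices. Combining with Step 4, absorbing the cross term, and noting $|\alpha|^2\ge\tfrac12$ (otherwise $\|h\|_2^2\ge\tfrac12$ and the gap term already dominates) gives $D_a+\|h\|_2^2\lesssim_s\Delta$.

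\textbf{Step 6: large $m$.} When $m\ge m_0$, Step 3 gives $\mathcal D(E)\ge c_s$, and both $D_a\le1$ and $d_a^2\le4$ are bounded. So it suffices to show $\eta(E)\ge c'_s>0$ whenever $\mathcal D(E)\ge\delta$. I would argue by contradiction. Take near-optimal pairs $(f_n,E_n)$ with $\Delta\to0$. Translating, $|f_n|^2\chi_{E_n}$ has a tight part by a concentration-compactness argument, using local compactness of $PW$ and the fact that splitting mass strictly lowers concentration. Pass to a weak limit of $\chi_{E_n}$, a density $0\le\rho\le1$ with $\int\rho\le s$. Apply a bathtub argument to the relaxed problem and then the equality case \eqref{eq:equality}; this forces the limit to be $(\psi_s(\cdot-a),I_s+a)$. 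Then $D_{a}(E_n)\to0$, contradicting $\mathcal D(E_n)\ge\delta$.
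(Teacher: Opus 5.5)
Your proposal is correct in outline. It uses the same ingredients as the paper: the weighted identity for $D_a$, the spectral gap on $J$, Bernstein's inequality, and the two cancellations $|E|=|J|$ and $\int(\chi_E-\chi_J)(\psi_a^2)'=0$. The organization differs, though.

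\textbf{Local step.} The paper proves a local comparison valid for \emph{every} $E$. It writes $\Delta=\mathcal D(E)+(\lambda_0-\int_I|f|^2)+\int(\mathbbm 1_I-\mathbbm 1_E)(|f|^2-\psi^2)$ and applies the gap to all of $f$ on $I$. It then subtracts $A+Bu'$ from $v=|f|^2-\psi^2$ to kill its endpoint values. The whole error, linear and quadratic in $f-\psi$, is therefore at most $K_s\mathcal D(E)\,d_{a_E}(f)$.

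You instead split $f=\alpha\psi_a+h$ and bound the quadratic part crudely by $m\|h\|_2^2$, which is why you need $m$ small. The cancellation is used only for the cross term.

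Your Step 5 does close with the optimality condition as stated. Taylor-expand $(\psi_a^2)'$ near the endpoints; the error is $\lesssim\int_{E\triangle J}\dist(x,\partial J)\,dx\lesssim D_a$. Use $w\gtrsim1$ away from the endpoints. The two conditions then form a nondegenerate $2\times2$ system, so the net mass of $\chi_E-\chi_J$ near each endpoint is $O(D_a)$. This is exactly the paper's choice of $A,B$.

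One correction: the remainder is $\lesssim\|h\|_2D_a$, and you need this linear bound. A term $C\|h\|_2D_a^{1/2}$ with fixed $C$ is of the same order as $D_a+\|h\|_2^2$ when $\|h\|_2\sim D_a^{1/2}$. Small $m$ does not help, so such a term cannot be absorbed.

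\textbf{Global step.} The paper applies the local comparison to a top eigenfunction to get $0\le\mathcal D-\eta\le A_s\mathcal D^2$. It then uses Lemma~\ref{lem:compactness} to reach $\mathcal D(E)\le S_s\eta(E)$ before returning to arbitrary $f$. Your small-$m$/large-$m$ split is simpler and needs only the same qualitative fact, $\eta(E_n)\to0\Rightarrow\mathcal D(E_n)\to0$.

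\textbf{Compactness.} Your proof of that fact is heavier. Excluding dichotomy requires $\theta\lambda_0(\sigma)+(1-\theta)\lambda_0(s-\sigma)<\lambda_0(s)$ uniformly in $\sigma\in[0,s]$ for $0<\theta<1$. That in turn needs \eqref{eq:interval} at all measures, plus continuity and strict monotonicity of $\lambda_0(\cdot)$.

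The paper avoids all of this by using exact eigenfunctions. Testing $T_{E_n}f_n=\lambda_{E_n}f_n$ against the weak limit $f\ne0$ gives $\int_{E_n}|f|^2\to\lambda_0(s)\|f\|_2^2$ even if mass escapes. So the fixed function $f$ is itself an asymptotic optimizer, and no tightness or weak-$*$ limit $\rho$ is needed.

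\textbf{Trade-off.} Your route gives a self-contained quantitative estimate for arbitrary $f$ near the optimal configuration. The paper's route gives a local inequality with no smallness hypothesis and the intermediate spectral bound $\mathcal D\le S_s\eta$.
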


In particular, both $d_{a_E}(f)$ and $\mathcal A(E)$ are bounded
by $C_s\Delta(f,E)^{1/2}$. The assertion with a common center
is stronger than estimating the two distances with independently
chosen centers. No connectedness, boundedness, or boundary
regularity is assumed for $E$. The proof uses the equality characterization \eqref{eq:equality}. 
The exponent of the function estimate cannot be improved; see
Section~\ref{sec:sharpness}.

Theorem~\ref{thm:main} is the Paley--Wiener counterpart of two
recent stability results. The first is that of G\'omez, Guerra,
Ramos and Tilli \cite{GGRT} for the short-time Fourier transform,
discussed in Section~\ref{STFT}. The second is that of
Garc\'ia-Ferrero and Ortega-Cerd\`a \cite{GO} for the
finite-dimensional analogue of \eqref{eq:interval}, in which
band-limited functions are replaced by polynomials of bounded
degree; they prove a local stability estimate and a global one
through a Wehrl-type entropy. In \cite{GO2} they extend both to
homogeneous polynomials in several variables and recover the stability results in the Bargmann--Fock space. 
Our passage from local to global stability uses compactness, so the present proof
does not provide an explicit value of $C_s$.

For a frequency interval of length $B>0$, modulation and unitary
dilation reduce the problem to our normalization with $s$ replaced
by the time--bandwidth product $B|E|$. The $L^2(\mathbb{R})$
distance and normalized symmetric difference are unchanged.

\subsection{Localization operators and the proof strategy}

Let $P$ be the orthogonal projection onto $PW$, and define the localization operator $T_E$ as 
\[
T_E=PM_EP\big|_{PW},
\qquad M_Ef=\mathbbm 1_Ef.
\]
% The operator $T_E$ is positive, self-adjoint and compact.
% Indeed, in Fourier coordinates the restriction map from $PW$
% to $L^2(E)$ has kernel $e^{2\pi i x\xi}$ on
% $E\times[-1/2,1/2]$, whose squared integral is $|E|$.
% It is therefore Hilbert--Schmidt, and $T_E$ is its adjoint
% composed with itself. Since $|E|>0$, it is also nonzero.
For a set $E$ with $|E|=s$ we have then
\[
\lambda_E:=\|T_E\|_{\mathrm{op}}
=\max_{\substack{f\in PW\\\|f\|_2=1}}\int_E|f|^2
\le\lambda_0(s).
\]

Let $\lambda_1(s)$ be the second largest concentration eigenvalue on
$I_s$, so that
\[
\lambda_0(s)-\lambda_1(s)>0.
\]
The strict gap follows from classical prolate theory \cite{SP}.
For a general set define the spectral deficit
\[
\eta(E)=\lambda_0(s)-\lambda_E.
\]
Every normalized top eigenfunction $f_E$ satisfies
\[
T_Ef_E=\lambda_Ef_E,
\qquad
\Delta(f_E,E)=\eta(E)\le\Delta(f,E)
\]
for every normalized $f\in PW$.

Our proof starts with the local comparison
\[
\Delta(f,E)\ge
\mathcal D(E)+\frac{\lambda_0(s)-\lambda_1(s)}{2}d_{a_E}(f)^2
-K_s\mathcal D(E)d_{a_E}(f).
\]
% The interval spectral gap controls the function error, while
% fixed measure and optimal centering supply two cancellations
% at the interval endpoints.

We apply this comparison first to a top eigenfunction $f_E$.
Testing with $\psi_s$ gives $\eta(E)\le\mathcal D(E)$, while
completing the square in the local estimate gives
\[
0\le\mathcal D(E)-\eta(E)
\le\frac{K_s^2}{2(\lambda_0(s)-\lambda_1(s))}\mathcal D(E)^2.
\]
A compactness argument using \eqref{eq:equality} shows that
$\eta(E_n)\to0$ forces $\mathcal D(E_n)\to0$. Together, these
facts imply the global bound
$\mathcal D(E)\le C_s\eta(E)$.
Finally, since $\eta(E)\le\Delta(f,E)$, we return to the local
estimate and obtain the main theorem for arbitrary functions.

Section~\ref{sec:local} proves the local comparison.
Section~\ref{sec:global} gives the compactness argument and the
proof of the main theorem, and Section~\ref{sec:sharpness}
discusses the exponent.
Constants denoted by $C_s$ or $c_s$ depend only on $s$ and may
change from line to line; $K_s$ denotes the fixed constant in
Proposition~\ref{prop:local}.

% \section{Preliminaries}
% \label{sec:preliminaries}

% Fourier inversion and Cauchy--Schwarz give, for $g\in PW$,
% \begin{equation}
% \|g^{(j)}\|_\infty
% \le\frac{\pi^j}{\sqrt{2j+1}}\|g\|_2,
% \qquad j=0,1,2,\ldots.
% \label{eq:bernstein}
% \end{equation}
% Indeed, the squared norm of the multiplier $(2\pi i\xi)^j$
% on $[-1/2,1/2]$ is $\pi^{2j}/(2j+1)$.
% Every $g\in PW$ is continuous and tends to zero at infinity,
% since $\widehat g\in L^1$. The formula
% \[
% g(z)=\int_{-1/2}^{1/2}\widehat g(\xi)e^{2\pi iz\xi}\,d\xi
% \]
% also extends $g$ to an entire function. Differentiation under
% the integral is justified uniformly on compact subsets of
% $\mathbb C$. In particular, $|g|^2$ is real analytic on
% $\mathbb R$.

% We also justify the existence of an optimal center used in
% Theorem~\ref{thm:main}.
% The overlap
% \[
% a\longmapsto\int_E|\psi_s(x-a)|^2\,dx
% \]
% is continuous and tends to zero at infinity by dominated
% convergence, since $\psi_s$ is bounded, tends to zero, and
% $|E|<\infty$. It is positive somewhere, because Tonelli's
% theorem gives
% \[
% \int_{\mathbb R}\int_E|\psi_s(x-a)|^2\,dx\,da=s.
% \]
% It therefore attains a positive maximum. In particular,
% $0\le\mathcal D(E)\le\lambda_0(s)$.

\section{The local estimate}
\label{sec:local}

\begin{prop}[Local comparison]
\label{prop:local}
There exists $K_s>0$ such that every normalized $f\in PW$,
every measurable $E$ with $|E|=s$, and every optimal center
$a_E$ satisfy
\begin{equation}
\Delta(f,E)\ge
\mathcal D(E)+\frac{\lambda_0(s)-\lambda_1(s)}{2}d_{a_E}(f)^2
-K_s\mathcal D(E)d_{a_E}(f).
\label{eq:local}
\end{equation}
% This estimate uses neither the global interval inequality
% \eqref{eq:interval} nor its equality characterization.
\end{prop}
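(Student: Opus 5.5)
After translating we may assume $a_E=0$, and we write $\psi=\psi_s$, $I=I_s$, $D=D_0(E)=\mathcal D(E)$. Decompose $f=e^{i\theta}(\alpha\psi+h)$ with $\alpha=|\langle f,\psi\rangle|\ge0$ and $h\perp\psi$ in $PW$. Then $\|h\|_2^2=1-\alpha^2$ and $d_0(f)^2=2(1-\alpha)$, so that
\[
\tfrac12 d_0(f)^2\le\|h\|_2^2\le d_0(f)^2 .
\]
Expanding gives
\[
\int_E|f|^2=\alpha^2(\lambda_0-D)+2\alpha\,\mathrm{Re}\int_E\psi\bar h+\int_E|h|^2 .
\]
We split each integral over $E$ as the integral over $I$ plus $\int(\mathbbm 1_{E\setminus I}-\mathbbm 1_{I\setminus E})$. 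By \eqref{eq:prolate-integral}, $\int_I\psi\bar h=\langle T_I\psi,h\rangle=\lambda_0\langle\psi,h\rangle=0$. Since $h\perp\psi$, we also have $\int_I|h|^2\le\lambda_1\|h\|_2^2$. Hence everything reduces to the two perturbation terms
\[
X=\int(\mathbbm 1_{E\setminus I}-\mathbbm 1_{I\setminus E})\,\psi\bar h,
\qquad
Y=\int(\mathbbm 1_{E\setminus I}-\mathbbm 1_{I\setminus E})\,|h|^2 .
\]
The goal is to show $|X|\le K D\|h\|_2$ and $Y\le KD\|h\|_2^2$.

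The key step is a weighted representation of $D$ together with two cancellations. Let $w(x)=|\psi(x)^2-\psi(s/2)^2|$. Since $|E\setminus I|=|I\setminus E|$, we can subtract the constant $\psi(s/2)^2$ and obtain
\[
D=\int_{I\setminus E}(\psi^2-\psi(s/2)^2)+\int_{E\setminus I}(\psi(s/2)^2-\psi^2)=\int(\mathbbm 1_{I\setminus E}+\mathbbm 1_{E\setminus I})\,w .
\]
This identity requires the classical facts that $\psi^2\ge\psi(s/2)^2$ on $I$ and $\psi^2\le\psi(s/2)^2$ off $I$. We also need strict inequality away from $\pm s/2$, that $\psi'(s/2)\neq0$, and that $\psi\to0$ at infinity. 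Together these give
\[
w(x)\ge c_s\min\{\dist(x,\{\pm s/2\}),1\}.
\]

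The two cancellations come from the measure constraint and from optimality of the center. The measure constraint $|E|=s$ kills constants. Optimality of $a=0$ gives $\frac{d}{da}\int_E\psi(x-a)^2\,dx=0$, so $\int\mathbbm 1_E(\psi^2)'=0$; since $(\psi^2)'$ is odd, $\int\mathbbm 1_I(\psi^2)'=0$ too. Therefore $\mathbbm 1_{E\setminus I}-\mathbbm 1_{I\setminus E}$ annihilates every function of the form $c_1+c_2(\psi^2)'$. Because $(\psi^2)'(\pm s/2)=\mp 2\psi(s/2)|\psi'(s/2)|\neq0$, we can choose $c_1,c_2$, with $|c_1|+|c_2|\le C_s\|h\|_\infty$, so that $g=\psi\bar h-c_1-c_2(\psi^2)'$ vanishes at both endpoints. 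The Bernstein bounds $\|h\|_\infty,\|h'\|_\infty\le C\|h\|_2$ then give
\[
|g(x)|\le C_s\|h\|_2\min\{\dist(x,\{\pm s/2\}),1\}\le K\|h\|_2\,w(x),
\]
and hence $|X|=\bigl|\int(\mathbbm 1_{E\setminus I}-\mathbbm 1_{I\setminus E})g\bigr|\le K\|h\|_2D$. The same argument applied to $|h|^2$ gives $|Y|\le K\|h\|_2^2D$, using $\|(|h|^2)'\|_\infty\le C\|h\|_2^2$.

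Assembling, and using $\alpha\le1$ and $\|h\|_2\le1$, we get
\[
\Delta\ge\lambda_0-\alpha^2(\lambda_0-D)-\lambda_1\|h\|_2^2-3KD\|h\|_2=(\lambda_0-\lambda_1)\|h\|_2^2+D-\|h\|_2^2D-3KD\|h\|_2 ,
\]
where we used $1-\alpha^2=\|h\|_2^2$. Inserting $\|h\|_2^2\ge d_0(f)^2/2$, $\|h\|_2\le d_0(f)$ and $\|h\|_2^2D\le D\,d_0(f)$ yields \eqref{eq:local} with $K_s=3K+1$. The main obstacle is the pointwise input on $\psi_s$: we need $|\psi_s|<\psi_s(s/2)$ strictly outside $\bar I_s$, $\psi_s>\psi_s(s/2)$ strictly inside $I_s$, and $\psi_s'(s/2)\ne0$. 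I would try to derive these from the prolate differential equation $((1-t^2)u')'+(\chi-c^2t^2)u=0$ via a Sonin/Pólya-type monotonicity argument for the amplitude $u^2+\frac{1-t^2}{\chi-c^2t^2}u'^2$, handling $|t|<1$ and $|t|>1$ separately, and I would otherwise cite them from \cite{SP}.
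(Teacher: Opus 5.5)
Your proposal is correct and follows the paper's proof. It uses the same spectral-gap bound on $I_s$, the same weighted representation of $\mathcal D(E)$, the same two cancellations from $|E|=s$ and optimality of the center, and the same subtraction of $c_1+c_2(\psi_s^2)'$ followed by the Bernstein bounds and the lower bound on $|u-u(s/2)|$. The only difference is bookkeeping: the paper applies the endpoint cancellation to $v=|f|^2-\psi_s^2$ as a whole, whereas you expand $f=e^{i\theta}(\alpha\psi_s+h)$, treat the $\psi_s^2$-component exactly, and cancel $\psi_s\bar h$ and $|h|^2$ separately, which gives the same inequality with $K_s=3K+1$.
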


\begin{proof}
Translate both $E$ and $f$ so that $a_E=0$, and write
$I=I_s$, $\psi=\psi_s$.
Leaving $\psi$ unchanged, multiply $f$ by a unimodular constant so that
\[
\langle f,\psi\rangle=|\langle f,\psi\rangle|\ge0.
\]
Then $d_0(f)=\|f-\psi\|_2$. 

\medskip\noindent
\textbf{Step 0: decompose the deficit.}
Since $\mathcal D(E) = \lambda_0(s) - \int_E|\psi|^2$ and $\int_I|\psi|^2=\lambda_0(s)$,
\begin{align}
\Delta(f,E)
&= \lambda_0(s)-\int_E|f(x)|^2\,dx \nonumber\\
% &=\mathcal D(E)-\int_E\bigl(|f|^2-|\psi|^2\bigr)\,dx
% \nonumber\\
&=\mathcal D(E)-\int_I\bigl(|f|^2-|\psi|^2\bigr)\,dx +
\int_{\mathbb R}(\mathbbm 1_I-\mathbbm 1_E)
\bigl(|f|^2-|\psi|^2\bigr)\,dx
\nonumber\\
&=\mathcal D(E)
+\left(\lambda_0(s)-\int_I|f|^2\,dx\right)
+
\int_{\mathbb R}(\mathbbm 1_I-\mathbbm 1_E)
\bigl(|f|^2-|\psi|^2\bigr)\,dx. \label{eq:decomposition}
\end{align}
% \begin{align}
% \Delta(f,E)
% ={}&\mathcal D(E)
% +\left(\lambda_0(s)-\int_I|f|^2\right)
% \nonumber\\
% &+\int_{\mathbb R}(\mathbbm 1_I-\mathbbm 1_E)
% \bigl(|f|^2-|\psi|^2\bigr)\,dx.
% \label{eq:decomposition}
% \end{align}
We bound the last two terms separately.

\medskip\noindent
\textbf{Step 1: use the interval spectral gap.}
Write
\[
f=\langle f,\psi\rangle\psi+h,
\qquad h\perp\psi.
\]
The mixed terms vanish because $T_I\psi=\lambda_0(s)\psi$.
The norm of $T_I$ restricted to $\psi^\perp$ is $\lambda_1(s)$, so
\begin{align*}
\int_I|f|^2
&=|\langle f,\psi\rangle|^2\lambda_0(s)
+\langle T_Ih,h\rangle\\
&\le |\langle f,\psi\rangle|^2\lambda_0(s)
+\lambda_1(s)\bigl(1-|\langle f,\psi\rangle|^2\bigr).
\end{align*}
Consequently,
\[
\lambda_0(s)-\int_I|f|^2
\ge(\lambda_0(s)-\lambda_1(s))\bigl(1-|\langle f,\psi\rangle|^2\bigr).
\]
By the choice of phase,
\[
d_0^2(f)=2\bigl(1-|\langle f,\psi\rangle|\bigr).
\]
Using $1-t^2\ge1-t$ for $0\le t\le1$, we obtain
\begin{equation}
\lambda_0(s)-\int_I|f|^2\ge\frac{\lambda_0(s)-\lambda_1(s)}{2}d_0^2(f).
\label{eq:gap}
\end{equation}

\medskip\noindent
\textbf{Step 2: the two cancellations.}
Set
\[
u=\psi^2,\quad v=|f|^2-u.
\]
We use the following properties of $u$:
\begin{equation}
\begin{gathered}
u(s/2)>0,\qquad u'(-s/2)=-u'(s/2)>0,\\
u(x)>u(s/2)\quad (|x|<s/2),\qquad
u(x)<u(s/2)\quad (|x|>s/2).
\end{gathered}
\label{eq:prolate-level}
\end{equation}
These follow from the classical prolate differential equation and
its ground-state solution; see \cite[Section V]{SP}.

In particular, the only zeros of $u-u(s/2)$ are the two simple zeros
$\pm s/2$. Near these points, $|u-u(s/2)|$ is bounded below by a positive
multiple of the distance to the endpoints. Outside fixed neighborhoods
of the endpoints it has a positive lower bound, since it has no other
zeros and $u(x)\to0$ while $u(s/2)>0$. Thus, for some $c_s>0$,
\begin{equation}
|u(x)-u(s/2)|\ge c_s\min\{1,\dist(x,\partial I)\},
\qquad x\in\mathbb R.
\label{eq:weight}
\end{equation}

Equal measures give $\int(\mathbbm 1_I-\mathbbm 1_E)=0$.
Since zero is an optimal center, the function
\[
\Phi(a)=\int_Eu(x-a)\,dx
\]
has a maximum at zero. Boundedness of $u'$ and finiteness
of $|E|$ justify differentiation under the integral, so
$0=\Phi'(0)=-\int_Eu'$.
Also $\int_Iu'=u(s/2)-u(-s/2)=0$, since $u$ is even. Hence
\begin{equation}
\int_{\mathbb R}(\mathbbm 1_I-\mathbbm 1_E)\,dx=0,
\qquad
\int_{\mathbb R}(\mathbbm 1_I-\mathbbm 1_E) u'\,dx=0.
\label{eq:cancellations}
\end{equation}
The signs in \eqref{eq:prolate-level}, together with equal measures,
give, for every prescribed center $a$,
\begin{equation}
D_a(E)=\int_{E\triangle(I_s+a)}|u(x-a)-u(s/2)|\,dx.
\label{eq:weighted}
\end{equation}
Indeed, subtracting $u(s/2)$ from the two integrals
in $D_a(E)$ does not change their difference. This identity also
shows that $D_a(E)=0$ exactly when $E=I_s+a$ up to a null set.
At our optimal center $a_E=0$, it becomes
\begin{equation}
\mathcal D(E)=\int_{\mathbb R}(\mathbbm 1_I-\mathbbm 1_E)(u-u(s/2))\,dx
=\int_{\mathbb R}|\mathbbm 1_I-\mathbbm 1_E|\,|u-u(s/2)|\,dx.
\label{eq:centered-weighted}
\end{equation}
Choose
\[
A=\frac{v(-s/2)+v(s/2)}2,
\qquad
B=\frac{v(-s/2)-v(s/2)}{2u'(-s/2)},
\qquad R=v-A-Bu'.
\]
Since $u'(-s/2)=-u'(s/2)>0$, we have $R(-s/2)=R(s/2)=0$.
The two cancellations give
\[
\int_{\mathbb R}(\mathbbm 1_I-\mathbbm 1_E) v\,dx
=\int_{\mathbb R}(\mathbbm 1_I-\mathbbm 1_E) R\,dx.
\]

\medskip\noindent
\textbf{Step 2.1: bound the remainder.}
As is well known, Fourier inversion and Cauchy--Schwarz give, for $g\in PW$,
\begin{equation}
\|g^{(j)}\|_\infty
\le\frac{\pi^j}{\sqrt{2j+1}}\|g\|_2,
\qquad j=0,1,2,\ldots.
\label{eq:bernstein}
\end{equation}
The identity
\[
v=(f-\psi)\overline f+\psi\,\overline{(f-\psi)}
\]
and \eqref{eq:bernstein} give
\[
\|v\|_\infty
\le\|f-\psi\|_\infty(\|f\|_\infty+\|\psi\|_\infty)
\le2d_0(f).
\]
Differentiating the same identity yields
\begin{align*}
\|v'\|_\infty
&\le\|f'-\psi'\|_\infty(\|f\|_\infty+\|\psi\|_\infty)\\
&\quad+\|f-\psi\|_\infty(\|f'\|_\infty+\|\psi'\|_\infty)
\le\frac{4\pi}{\sqrt3}d_0(f).
\end{align*}
% These estimates apply \eqref{eq:bernstein} to $f$, $\psi$,
% and $f-\psi$, not to $v$, which need not belong to $PW$.
Similarly, we can write $u'=2\psi\psi'$ and $u''=2(\psi')^2+2\psi\psi''$, so
\eqref{eq:bernstein} gives
\[
\|u'\|_\infty\le2\|\psi\|_\infty\|\psi'\|_\infty\le\frac{2\pi}{\sqrt3},
\qquad
\|u''\|_\infty\le2\|\psi'\|_\infty^2+2\|\psi\|_\infty\|\psi''\|_\infty
\le2\pi^2\Bigl(\frac13+\frac1{\sqrt5}\Bigr)\le2\pi^2 .
\]
Since $u'(-s/2)>0$ depends only on $s$,
\[
|A|+|B|\le C_s d_0(f),
\qquad
\|R\|_\infty+\|R'\|_\infty\le C_s d_0(f).
\]
Since $R(-s/2)=R(s/2)=0$, the fundamental theorem of calculus,
together with the sup-norm bound above, gives
\[
|R(x)|\le C_s d_0(f) \min\{1,\dist(x,\partial I)\}
\le C_s d_0(f) \,|u(x)-u(s/2)|,
\]
where the last inequality is \eqref{eq:weight}.
This also bounds $R/(u-u(s/2))$, with continuous extension
at the endpoints.

Finally,
\begin{align*}
\left|\int_{\mathbb R}(\mathbbm 1_I-\mathbbm 1_E) v\,dx\right|
&=\left|\int_{\mathbb R}(\mathbbm 1_I-\mathbbm 1_E) R\,dx\right|\\
&\le C_s d_0(f)\int_{\mathbb R}|(\mathbbm 1_I-\mathbbm 1_E)|\,|u-u(s/2)|\,dx\\
&=C_s\mathcal D(E)d_0(f).
\end{align*}
Fix a constant $K_s>0$ for this estimate.
Combining it with \eqref{eq:decomposition} and \eqref{eq:gap}
proves \eqref{eq:local}.
\end{proof}

\section{From local to global stability}
\label{sec:global}

Since the first stage of the proof concerns the spectral
deficit, we need compactness only for exact maximizing
eigenfunctions. This lets us use their eigenvalue equations
directly.

\begin{lemma}[Qualitative spectral stability]
\label{lem:compactness}
If $|E_n|=s$ and $\eta(E_n)\to0$, then
$\mathcal D(E_n)\to0$.
\end{lemma}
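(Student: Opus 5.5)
We argue by contradiction using compactness of normalized top eigenfunctions. Suppose $|E_n|=s$, $\eta(E_n)\to0$, but $\mathcal D(E_n)\ge\delta>0$ along a subsequence. Let $f_n=f_{E_n}$ be normalized top eigenfunctions, so $\int_{E_n}|f_n|^2=\lambda_{E_n}\to\lambda_0(s)$. Translating each pair $(f_n,E_n)$ by a common amount changes neither $\eta$ nor $\mathcal D$. We therefore normalize so that $|f_n|$ attains its maximum at $0$; this is possible since $f_n\in PW$ is continuous and tends to zero at infinity.

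\textbf{Step 1: tightness of $f_n$ via the eigenvalue equation.} Since $PW$ is bounded in $L^2$, we pass to a weak limit $f_n\rightharpoonup f$ in $PW$. Weak convergence in $PW$ gives locally uniform convergence, by the reproducing kernel and the Bernstein bounds \eqref{eq:bernstein}. The key point is to rule out loss of mass. From $\lambda_{E_n}f_n=P(\mathbbm 1_{E_n}f_n)$ and $\lambda_{E_n}\ge\lambda_0(s)/2$, we get
\[
|f_n(x)|\le C\int_{E_n}|\mathrm{sinc}(x-y)||f_n(y)|\,dy .
\]
So $f_n$ is controlled by mass on $E_n$. To avoid losing mass by splitting, I would use a concentration-compactness dichotomy. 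Suppose $E_n$ splits into far-apart parts $E_n^{(1)},E_n^{(2)}$ carrying masses $\alpha,\beta>0$ of $|f_n|^2$, and the sinc interaction between far-apart regions vanishes. Then the eigenvalue equation essentially decouples, and one obtains $\lambda_{E_n}\le\max_i\lambda_0(|E_n^{(i)}|)+o(1)<\lambda_0(s)-c$. The last inequality holds because $\lambda_0$ is strictly increasing, and each piece has measure bounded away from $s$, since its $f$-mass is bounded below in terms of $|E^{(i)}|$ via $\|f\|_\infty\le1$. Vanishing, meaning $\sup_x\int_{x+[0,1]}|f_n|^2\to0$, is excluded because then $\int_{E_n}|f_n|^2\le s\|f_n\|_\infty^2\to0$. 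Hence after translation $f_n\to f$ strongly in $L^2$ with $\|f\|_2=1$.

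\textbf{Step 2: compactness of the sets.} With $f_n\to f$ strongly, $|f_n|^2\to|f|^2$ in $L^1$. Take a weak-$*$ limit $\mathbbm 1_{E_n}\rightharpoonup\rho$ in $L^\infty$, with $0\le\rho\le1$. Tightness of $|f_n|^2$ and the bound $\int_{E_n}|f_n|^2\to\lambda_0(s)$ give $\int\rho|f|^2=\lambda_0(s)$ and $\int\rho\le s$. A bathtub argument then shows $\int_{E}|f|^2\ge\int\rho|f|^2$ for the superlevel set $E$ of $|f|^2$ with $|E|=\int\rho$; one can enlarge it to measure $s$. By \eqref{eq:interval}, equality holds throughout. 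By \eqref{eq:equality}, $E=I_s+a$ and $f=c\psi_s(\cdot-a)$. Moreover $\rho=\mathbbm 1_{I_s+a}$ a.e., because $|f|^2>0$ has no level sets of positive measure by analyticity. Since weak-$*$ convergence of indicators to an indicator is strong in $L^1_{loc}$, we get $\int_{E_n}|\psi_s(x-a)|^2\,dx\to\int\rho|\psi_s(\cdot-a)|^2=\lambda_0(s)$. This uses that $\psi_s^2$ is integrable, so tails of the $E_n$ far away contribute little. Therefore $\mathcal D(E_n)\le D_a(E_n)\to0$, which is a contradiction.

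\textbf{Main obstacle.} The main obstacle is Step 1: excluding dichotomy, since $E_n$ need not be bounded or connected. I would first try the operator approach. Split $E_n=E_n'\cup E_n''$ with $\dist(E_n',E_n'')\to\infty$, at a scale chosen by a pigeonhole argument over dyadic gaps. Then $T_{E_n}$ is a small perturbation of $T_{E_n'}\oplus T_{E_n''}$ acting on nearly orthogonal pieces. This would give $\lambda_{E_n}\le\max(\lambda_0(|E_n'|),\lambda_0(|E_n''|))+o(1)$, together with a lower bound on $|E_n''|$ from the mass it carries.
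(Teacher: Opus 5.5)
Your proposal has a genuine gap: Step 1 is where all the difficulty of your route lies, you flag it yourself as the main obstacle, and it is not carried out. Step 2 is fine once strong convergence is granted.

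\textbf{The gap in Step 1.} The concentration--compactness dichotomy splits the mass of $|f_n|^2$ on $\mathbb R$. It does not split the set $E_n$.
\begin{itemize}
\item In general $E_n$ cannot be written as $E_n'\cup E_n''$ with $\dist(E_n',E_n'')\to\infty$. A pigeonhole over dyadic annuli only produces a third, intermediate piece $E_n^{\mathrm{mid}}$ of small but nonzero measure, which you would have to absorb separately (for instance via $\lambda_{E_n^{\mathrm{mid}}}\le|E_n^{\mathrm{mid}}|$).
\item More seriously, the bound $\lambda_{E_n}\le\max_i\lambda_0(|E_n^{(i)}|)+o(1)<\lambda_0(s)-c$ needs both outer pieces to have measure bounded below. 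That means you need $\int_{E_n^{(i)}}|f_n|^2\ge c>0$ for $i=1,2$. This does not follow from the splitting of $\|f_n\|_2^2$: a priori the far-away mass of $f_n$ could lie outside $E_n$.
\item Ruling that out needs a further argument through $f_n=\lambda_{E_n}^{-1}P(\mathbbm 1_{E_n}f_n)$, controlling the slow $1/x$ decay of the sinc kernel. The splitting radius must also be chosen compatibly with the mass profile of $f_n$.
\end{itemize}
None of this is supplied, so the argument is incomplete at its key step.

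\textbf{The missing idea.} The paper's proof needs no tightness at all. Normalize so that $f_n(0)\ge\sqrt{\lambda_0(s)/(4s)}$. This is possible since $\int_{E_n}|f_n|^2\ge\lambda_0(s)/2$ and $|E_n|=s$. The weak limit $f$ is then nonzero, though possibly $\|f\|_2<1$.
\begin{itemize}
\item Testing $T_{E_n}f_n=\lambda_{E_n}f_n$ against the fixed $f\in PW$ gives $\int_{E_n}f_n\bar f=\lambda_{E_n}\langle f_n,f\rangle\to\lambda_0(s)\|f\|_2^2$.
\item The cross term $\int_{E_n}(f_n-f)\bar f$ tends to $0$. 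Use local uniform convergence on $[-R,R]$, and the bound $\|f_n-f\|_2\,\|f\mathbbm 1_{\{|x|>R\}}\|_2$ on the complement.
\item Hence $\int_{E_n}|f|^2\to\lambda_0(s)\|f\|_2^2$. The bathtub argument together with \eqref{eq:interval} makes $f$ an extremal, and \eqref{eq:equality} gives $f=c\psi_s(\cdot-b)$.
\item Then $D_b(E_n)\to0$ directly. You never need to identify a limit of the sets or to show $\|f\|_2=1$.
\end{itemize}
Whatever part of the mass survives in the weak limit is automatically an asymptotic optimizer for the $E_n$. So dichotomy is harmless here, not something that has to be excluded.
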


\begin{proof}
Suppose otherwise, and pass to a subsequence on which
$\mathcal D(E_n)\ge\varepsilon_0>0$.
Choose normalized top eigenfunctions $f_n$:
\[
T_{E_n}f_n=\lambda_{E_n}f_n,
\qquad \lambda_{E_n}\longrightarrow\lambda_0(s).
\]
For large $n$, $\int_{E_n}|f_n|^2\ge\lambda_0(s)/2$.
Since $|E_n|=s$, we can translate $E_n$ and $f_n$, and choose
a phase, so that
\[
f_n(0)\ge\sqrt{\lambda_0(s)/(4s)}.
\]
These operations do not change $\eta(E_n)$ or $\mathcal D(E_n)$.

After passing to a further subsequence,
$f_n\rightharpoonup f$ in $PW$.
Point evaluation is continuous, so
$f\ne0$. The same estimates give local uniform convergence:
weak convergence gives convergence at every point, and the
derivative bound gives equicontinuity.

We claim that
\begin{equation}
\int_{E_n}(f_n-f)\overline f\,dx\longrightarrow0.
\label{eq:cross-limit}
\end{equation}
On a fixed bounded interval this follows from local uniform
convergence. On its complement, Cauchy--Schwarz gives the bound
\[
\|f_n-f\|_2\,
\|f\mathbbm 1_{\{|x|>R\}}\|_2,
\]
which is uniformly small for large $R$.

Testing the eigenvalue equation against the fixed function $f$
now gives
\[
\int_{E_n}|f|^2\,dx
=\lambda_{E_n}\langle f_n,f\rangle+o(1)
\longrightarrow\lambda_0(s)\|f\|_2^2.
\]
In particular,
\begin{equation}
\lambda_0(s)\|f\|_2^2-\int_{E_n}|f|^2\,dx
\longrightarrow0.
\label{eq:profile}
\end{equation}

Let $A$ be a superlevel set of $|f|^2$ with $|A|=s$. Let $t>0$ be such that $A=\{x:|f(x)|^2>t\}$.
For any $B$ of measure $s$, equal measures give
\[
\int_A|f|^2-\int_B|f|^2
=\int_{A\setminus B}(|f|^2-t)
+\int_{B\setminus A}(t-|f|^2)\ge0.
\]
Therefore \eqref{eq:profile} and \eqref{eq:interval} imply
\[
\lambda_0(s)\|f\|_2^2
=\lim_n\int_{E_n}|f|^2
\le\int_A|f|^2
\le\lambda_0(s)\|f\|_2^2.
\]
The equality characterization \eqref{eq:equality} gives
$f=c\psi_s(\cdot-b)$ for some $c\ne0$ and $b\in\mathbb R$.
Testing the definition of $\mathcal D(E_n)$ at this center,
\[
0\le\mathcal D(E_n)
\le
\frac{\lambda_0(s)\|f\|_2^2-\int_{E_n}|f|^2}{\|f\|_2^2}
\longrightarrow0,
\]
contradicting $\mathcal D(E_n)\ge\varepsilon_0$.
\end{proof}

\begin{proof}[Proof of Theorem~\ref{thm:main}]
We first control the set discrepancy by the spectral deficit,
and then return to an arbitrary normalized function.

\medskip\noindent
\textbf{Step 1: compare the two deficits.}
Fix $E$, an optimal center $a_E$, and a normalized top
eigenfunction $f_E$.
Testing the Rayleigh quotient with $\psi_s(\cdot-a_E)$ gives
$\eta(E)\le \mathcal D(E)$. Proposition~\ref{prop:local}, applied to $f_E$,
and completion of the square yield
\begin{equation}
\begin{aligned}
0\le \mathcal D(E)-\eta(E)
&\le K_s\mathcal D(E)d_{a_E}(f_E)-\frac{\lambda_0(s)-\lambda_1(s)}{2}d_{a_E}^2(f_E)\\
&\le A_s\mathcal D(E)^2,
\end{aligned}
\label{eq:spectral-quadratic}
\end{equation}
where 
\begin{equation*}
    A_s=\frac{K_s^2}{2(\lambda_0(s)-\lambda_1(s))}>0.
\end{equation*}

\medskip\noindent
\textbf{Step 2: make the spectral comparison global.}
We claim that there is $S_s>0$ such that
\begin{equation}
\mathcal D(E)\le S_s\eta(E)
\qquad\text{for every }|E|=s.
\label{eq:global-spectral}
\end{equation}
Suppose not. There would be sets $E_n$ with
\[
\mathcal D(E_n)>n\eta(E_n).
\]
Since $\mathcal D(E_n)\le\lambda_0(s)$, we have
$\eta(E_n)\to0$. Lemma~\ref{lem:compactness} gives
$\mathcal D(E_n)\to0$. For large $n$,
$A_s\mathcal D(E_n)\le1/2$, and
\eqref{eq:spectral-quadratic} then implies
\[
\mathcal D(E_n)
\le\eta(E_n)+A_s\mathcal D(E_n)^2
\le\eta(E_n)+\tfrac12\mathcal D(E_n).
\]
Hence $\mathcal D(E_n)\le2\eta(E_n)$, a contradiction.
This proves \eqref{eq:global-spectral}.

\medskip\noindent
\textbf{Step 3: return to arbitrary functions.}
Let $\|f\|_2=1$.
Since $\eta(E)\le\Delta(f,E)$, \eqref{eq:global-spectral} gives
\[
\mathcal D(E)\le S_s\Delta(f,E).
\]
Moreover, Proposition~\ref{prop:local} implies
\[
\frac{\lambda_0(s)-\lambda_1(s)}{2}d_{a_E}^2(f)
\le\Delta(f,E)-\mathcal D(E)+K_s\mathcal D(E)d_{a_E}(f).
\]
Phase minimization gives
\[
d_{a_E}^2(f)=2\bigl(1-|\langle f,\psi_s(\cdot-a_E)\rangle|\bigr)\le2.
\]
Consequently,
\[
\frac{\lambda_0(s)-\lambda_1(s)}{2}d_{a_E}^2(f)
\le\Delta(f,E)+\sqrt2K_s\mathcal D(E)
\le(1+\sqrt2K_sS_s)\Delta(f,E).
\]
Together with the set estimate, this proves \eqref{eq:main}.

\medskip\noindent
\textbf{Step 4: compare with symmetric difference.}
For any prescribed center $a$, we show that
\begin{equation}
|E\triangle(I_s+a)|^2\le C_sD_a(E).
\label{eq:asymmetry}
\end{equation}
Translate to $a=0$ and put $\alpha=|E\triangle I_s|\le2s$.
The case $\alpha=0$ is immediate. Otherwise choose
\[
t=\frac{\alpha}{8(1+s)}<1.
\]
At most $4t\le\alpha/2$ of the symmetric difference lies
within distance $t$ of the two endpoints. Thus at least
$\alpha/2$ lies at distance at least $t$.
By \eqref{eq:weight} and \eqref{eq:weighted},
\[
D_0(E)\ge c_s t\,\frac\alpha2
=\frac{c_s}{16(1+s)}\alpha^2.
\]
This proves \eqref{eq:asymmetry}. Applying it at $a_E$ and
using \eqref{eq:main} proves \eqref{eq:main-measure}.
\end{proof}

The equality characterization was used only in the compactness
lemma to identify a nonzero weak limit. We did not need strong
convergence of the maximizing functions or convergence of the
sets first. The quadratic comparison \eqref{eq:spectral-quadratic}
follows directly from Proposition~\ref{prop:local}; compactness
makes it global \eqref{eq:global-spectral}.
The compactness argument does not give an explicit value of
$S_s$, and the final stability constant remains non-explicit.

\section{The square-root exponent}
\label{sec:sharpness}

Let $\psi_{1,s}$ be a real, normalized second prolate for $I_s$,
so that
\[
T_{I_s}\psi_{1,s}=\lambda_1(s)\psi_{1,s},
\qquad \langle\psi_{1,s},\psi_s\rangle=0.
\]
Set
\[
f_t=\sqrt{1-t^2}\,\psi_s+it\psi_{1,s},
\qquad 0<t<1.
\]
Orthogonality and the eigenvalue equations give
\[
\|f_t\|_2=1,
\qquad
\Delta(f_t,I_s)=(\lambda_0(s)-\lambda_1(s)) t^2.
\]
At the same time,
\[
d_0(f_t)^2=2(1-\sqrt{1-t^2})\sim t^2
\qquad(t\to0).
\]
Equation \eqref{eq:weighted} shows that zero is the unique
optimal set center of $I_s$. Therefore no estimate
$d_{a_E}(f)\le C_s\Delta(f,E)^\beta$ with $\beta>1/2$
can hold for all normalized $f$ and all sets of measure $s$.
This proves optimality of the square-root exponent in Theorem \ref{thm:main}.

\section*{Acknowledgments} The idea and proof included in Section \ref{STFT} were developed during my research visit from February to May 2026 at the Universitat de Barcelona, which was supported by Grant 361374 of the Research Council of Norway. I thank the Departament de Matemàtiques i Informàtica and Joaquim Ortega-Cerdà, who supervised the visit, for their warm hospitality. The idea to use similar techniques in the setting of Paley--Wiener spaces, which led to Proposition \ref{prop:local}, came to me during Joaquim Ortega-Cerdà's lecture series during \textsl{Complex Analysis and Operator Theory: a Series of 3-Hour Lectures}, organized by Dipartimento di Matematica "Federigo Enriques", Università degli Studi di Milano. 

I acknowledge the use of ChatGPT and Claude in finishing this manuscript. AI was used to assist me getting from local to global stability as well as in the writing process of the whole manuscript.

\end{document}